\newtheorem{pro}{Proposition}[section]
\newtheorem{thm}[pro]{Theorem}
\newtheorem{corl}[pro]{Corollary}
\theoremstyle{remark}
\theoremstyle{definition}
\numberwithin{equation}{section}
\begin{document}
%
%
%
%
%
%
%
%
%


\title[Non- (quantum) differentiable $C^1$-functions]{Non- (quantum)
  differentiable $C^1$-functions in the spaces with trivial Boyd
  indices}
\author[D.Potapov]{Denis Potapov}


\email{pota0002@infoeng.flinders.edu.au}

\author[F.Sukochev]{Fyodor Sukochev}
\address{%
School of Informatics and Engineering, \br
Faculty of Science and Engineering, \br
Flinders Univ. of SA, Bedford Park, 5042, \br
Adelaide, SA, Australia.}
\email{sukochev@infoeng.flinders.edu.au}
\subjclass{Primary 47A55; Secondary 47L20}

\keywords{Commutator estimates, derivations}

\date{October 31, 2006}
\maketitle

{ 


\renewcommand{\labelenumi}{{\rm \theenumi}}

\def\toRomans{
  \renewcommand{\theenumi}{(\roman{enumi})}}

\def\toAlpha{
  \renewcommand{\theenumi}{(\alph{enumi})}}

\toRomans

\let\ucal\mathcal
\let\ds\displaystyle
\let\ss\scriptstyle
\def\Rl{{\mathbb{R}}}
\def\Bd{{\mathcal{B}}}
\def\Z{{\mathbb{Z}}}
\def\N{{\mathbb{N}}}
\def\Rank{{\mathcal{F}}}
\def\HFF{{\mathcal{R}}}
\def\id{{\mathbf{1}}}
\def\cC{{\mathfrak{S}}}
\def\Cx{{\mathbb{C}}}
\def\M#1{{\mathbb{M}_{#1}(\Cx)}}
\def\Mf#1{{M_f(#1)}}
\def\H{{\mathcal H}}
\def\de{=}
\def\eq#1{\mathop{\smash{=}}\limits^{\hbox{\small #1}}}
\def\geq#1{\mathop{\smash{\ge}}\limits^{\hbox{\small #1}}}
\def\leq#1{\mathop{\smash{\le}}\limits^{\hbox{\small #1}}}


\begin{abstract}
  If~$E$ is a separable symmetric sequence space with trivial Boyd
  indices and~$\cC^E$ is the corresponding ideal of compact operators,
  then there exists a ~$C^1$-function~$f_E$, a self-adjoint
  element~$W\in \cC^E$ and a densely defined closed symmetric
  derivation~$\delta$ on~$\cC^E$ such that~$W \in Dom\ \delta$,
  but~$f_E(W) \notin Dom\ \delta$.
\end{abstract}

\section{Introduction.}

This paper studies properties of infinitesimal generator~$\delta^\cC$ of
a strongly continuous group~$\alpha = \{\alpha_t\}_{t\in\Rl}$ in Banach
algebras~$\cC \subseteq \Bd(\H)$, given by~$\alpha_t(y) = e^{itX} y
e^{-itX}$, $y \in \cC$, where~$X$ is an unbounded self-adjoint operator
in the Hilbert space~$\H$.  The generator~$\delta^\cC$ is a densely
defined closed symmetric derivation on~$\cC$ and we are concerned with
the question when its domain~$Dom\ \delta^\cC$ satisfies the following
condition $$ x = x^* \in Dom\ \delta^\cC \Rightarrow f(x) \in Dom\
\delta^\cC, $$ for every~$C^1$-function~$f: \Rl \rightarrow \Cx$?  In
the recent paper~\cite{AdPS2005}, it is shown that there
are~$C^*$-algebras~$\cC$ and operators~$X$ for which the implication
above fails (see also~\cite{McI1978}).  In this paper, we consider the
case when the Banach algebra~$\cC$ is a symmetrically normed ideal of
compact operators on~$\H$. (see e.g.\ \cite{GohbergKrein} and
Section~\ref{sec:symSpaces} below).  It is immediately clear that for
every self-adjoint operator~$X$, the group~$\alpha$ acts isometrically
on such an ideal~$\cC$ and, in fact, is a~$C_0$-group on~$\cC$, provided
that~$\cC$ is separable (see e.g.~\cite{PS-RFlow}).  It is an
interesting problem to determine the class of ideals~$\cC$ in
which~$Dom\ \delta^\cC$ is closed with respect to the~$C^1$-functional
calculus.  Note, that the class of such ideals is non-empty.  For
example it contains the Hilbert-Schmidt ideal.  The proof of the latter
claim may be found in~\cite{PoSu}.  On the other hand, it is unclear
whether this class contains the Schatten-von Neumann ideals~$\cC^p$
when~$1< p < \infty$, $p \ne 2$.  In this paper, we however show that
the class of all symmetrically normed ideals~$\cC$ whose Boyd indices
are trivial fail the implication above.  For various geometric
characterizations of the latter class we refer to~\cite{Ar1978} (see
also Section~\ref{sec:symSpaces} below).  Our methods are built upon and
extend those of~\cite{AdPS2005, McI1978}.  Our results also contribute
to the study of commutator bounded operator-functions initiated
in~\cite{KiShI, KiShII, KiShIII}.

\section{Schur multipliers.}

Let $\M n$ be the $C^*$-algebra of all $n\times n$ complex matrices,
let $B\in \M n$ be a diagonal matrix $diag\{\lambda_1,\lambda_2,
\ldots, \lambda_n\}$.  The Schur multiplier $\Mf B$ associated with
the diagonal matrix $B$ and the function $f:\Rl\mapsto\Cx$ is defined
as follows. For every matrix $X=\{\xi_{jk}\}_{j,k=1}^n\in\M n$, the
matrix $\Mf B(X)\in\M n$ has $(j,k)$ entry given by $$ [\Mf B(X)]_{jk}
= \psi_f(\lambda_j,\lambda_k)\xi_{jk},\ \ 1\le j,k\le n, $$ where
$$
\psi_f(\lambda,\mu) = \begin{cases} \ds {f(\lambda)-f(\mu)\over
    \lambda - \mu},& \text{$\lambda\ne\mu$,}\cr 0,&
  \text{$\lambda=\mu$.}\cr
\end{cases} $$ Alternatively, if $\{P_j\}_{j=1}^n$ is the collection
of one-dimensional spectral projections of the matrix $B$ then
$B=\sum_{j=1}^n \lambda_jP_j$, and
\begin{equation} \label{schurprojections} 
\Mf BX = \sum_{1\le j,k\le n}
\psi_f(\lambda_j,\lambda_k)P_jXP_k.   
\end{equation}
For every matrix $X\in\M n$ the following equation outlines the
interplay between the Schur multiplier~$\Mf B$ and the commutator~$[B,
X] = BX - XB$
\begin{equation} \label{schurcomm} 
\Mf
B([B,X]) = [f(B),X].  \end{equation}
Indeed, 
\begin{align*}
  \Mf B([B,X]) & = \sum_{1\le j, k\le n} \psi_f(\lambda_j,\lambda_k)
  P_j\Bigl[\sum_{s=1}^n\lambda_sP_s,X\Bigr]P_k \cr & = \sum_{1\le j,
    k\le n} \psi_f(\lambda_j,\lambda_k) (\lambda_j-\lambda_k)P_jXP_k \cr
   & = \sum_{1\le j, k\le n} (f(\lambda_j) - f(\lambda_k))P_jXP_k \cr & =
  \sum_{1\le j, k\le n} P_j\Bigl[\sum_{s=1}^nf(\lambda_s)P_s, X\Bigr]P_k
  = [f(B), X].\cr
\end{align*}

\section{Symmetric spaces with trivial Boyd indices.}
\label{sec:symSpaces}

Let~$E=E(0, \infty)$ be a symmetric Banach function space, i.e.\ $E=E(0,
\infty)$ is a rearrangement invariant Banach function space on~$(0,
\infty)$ (see~\cite{LT-II}) with the additional property that~$f, g\in
E$ and~$g \prec\prec f$ imply that~$\|g\|_E \le \|f\|_E$.  Here~$g\prec
\prec f$ denotes submajorization in the sense of Hardy, Littlewood and
Polya, i.e.\ $$ \int_0^t g^*(s)\, ds \le \int_0^t f^*(s) \, ds,\ \ t >
0, $$ where~$f^*$ (respectively,~$g^*$) stands for the decreasing
rearrangement of the function~$f$ (respectively,~$g$).

Let us consider the group of dilations~$\{\sigma_\tau\}_{\tau > 0}$
defined on the space~$S = S(0, \infty)$ of all Lebesgue measurable
functions on~$(0, \infty)$.  The operator~$\sigma_\tau$, $\tau > 0$ is
given by $$ (\sigma_\tau f)(t) = f(\tau^{-1}\, t),\ \ t > \infty. $$
If~$E$ is a symmetric Banach function space, then the lower
(respectively, upper) Boyd index~$\alpha_E$ (respectively,~$\beta_E$) of
the space~$E$ is defined by $$ \alpha_E := \lim_{\tau \rightarrow +0}
\frac{\log \|\sigma_\tau\|_{E \mapsto E}}{\log \tau}\ \
\left(\text{respectively,}\ \beta_E := \lim_{\tau \rightarrow +\infty} \frac
{\log \|\sigma_\tau\|_{E \mapsto E}}{\log \tau}\right). $$

We say that the space~$E$ has the trivial lower (resp.\ upper) Boyd
index when~$\alpha_E = 0$ (respectively, $\beta_E = 1$).  It is known
that, if~$\alpha_E = 0$ (respectively,~$\beta_E = 1$), then the
space~$E$ is not an interpolation space in the pair~$(L_1, L_p)$ for
every~$p < \infty$ (respectively,~$(L_q, L_\infty)$ for every~$1< q$),
\cite[Section~2.b]{LT-II}.

\begin{pro}  
  (\cite[Proposition 2.b.7]{LT-II}) If $E$ be a symmetric sequence space
  and $\alpha_E=0$ (respectively, $\beta_E=1$), then for every
  $\varepsilon>0$ and every $n\in\N$ there exist $n$ disjointly
  supported vectors $\{x_j\}_{j=1}^n$ in $E$, having the same
  distribution, such that for every scalars $\{a_j\}_{j=1}^n$ the
  following holds \begin{align} \label{trivboydestimates} \max_{1\le
      j\le n} |a_j| & \le \Bigl\|\sum_{j=1}^n a_jx_j \Bigr\|_E \le
    (1+\varepsilon) \max_{1\le j\le n} |a_j|\cr
    \biggl(\hbox{respectively, }(1-\varepsilon) \sum_{j=1}^n |a_j| &\le
    \Bigl\|\sum_{j=1}^n a_jx_j\Bigr\|_E \le \sum_{j=1}^n |a_j|
    \biggr).\cr \end{align} \end{pro}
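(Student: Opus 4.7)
Since the statement is Proposition~2.b.7 of~\cite{LT-II} (which the authors invoke verbatim), the proof I would produce is the standard Lindenstrauss--Tzafriri argument. I focus on the case $\alpha_E = 0$, yielding the $\ell_\infty^n$-type estimate; the case $\beta_E = 1$ is parallel, producing $\ell_1^n$-estimates (the hard direction there being the lower bound rather than the upper).

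The plan is to reduce the multi-vector statement to a single-vector near-invariance-under-dilation statement. Suppose one can find $y \in E$ with $\|y\|_E = 1$ such that the disjoint sum $y_1 + \cdots + y_n$ of $n$ equidistributed copies of $y$ satisfies $\|y_1 + \cdots + y_n\|_E \le 1 + \varepsilon$. Setting $x_j := y_j$, equidistribution is built in, and the lower bound $\max_j|a_j| \le \|\sum_j a_j x_j\|_E$ is immediate: the restriction of the sum to $\mathrm{supp}(x_j)$ equals $a_j x_j$, whose $E$-norm is $|a_j|$. For the upper bound, the pointwise estimate $|\sum_j a_j x_j| \le (\max_j|a_j|)\sum_j x_j$, combined with the assumed near-equality $\|\sum_j x_j\|_E \le 1 + \varepsilon$, yields $\|\sum_j a_j x_j\|_E \le (1+\varepsilon)\max_j|a_j|$.

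The main obstacle is producing such a $y$, and this is where $\alpha_E = 0$ enters. The disjoint sum $y_1 + \cdots + y_n$ has the same distribution as $\sigma_n y^*$, so the task becomes: locate a normalized $y$ for which the $n$-fold dilation has norm arbitrarily close to~$1$. The triviality of the lower Boyd index forces the fundamental function $\phi_E(t) = \|\chi_{[0,t]}\|_E$ to grow sub-polynomially in the precise sense dictated by the definition of $\alpha_E$, and in particular rules out $E$ being interpolated between $L_1$ and $L_p$ for any finite $p$, as noted in the excerpt preceding the statement. A Krivine/Maurey-type extraction argument (as carried out in~\cite{LT-II}) then identifies, among normalized characteristic functions on blocks of length~$N$, a choice of $N$ for which $\phi_E(nN)/\phi_E(N) \le 1+\varepsilon$; disjointly placing $n$ copies of the resulting $y$ completes the construction. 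The case $\beta_E = 1$ proceeds analogously, with the upper Boyd index controlling the Köthe dual of $E$ and thereby yielding the $\ell_1^n$-type lower bound $(1-\varepsilon)\sum_j|a_j| \le \|\sum_j a_j x_j\|_E$ as the delicate inequality.
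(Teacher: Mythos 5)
The paper offers no proof of this proposition --- it is quoted verbatim from \cite[Proposition~2.b.7]{LT-II} --- so I can only judge your argument on its own merits. Your reduction is the right one and is essentially the standard one: take $x_1,\dots,x_n$ to be disjoint, nonnegative, equidistributed copies of a single normalized $y$; the lower bound $\max_j|a_j|\le\|\sum_j a_jx_j\|_E$ then follows from the lattice property and rearrangement invariance, the upper bound from $|\sum_j a_jx_j|\le(\max_j|a_j|)\sum_j x_j$, and everything hinges on finding $y$ with $\|y\|_E=1$ and $\|\sigma_n y^*\|_E\le 1+\varepsilon$ (since the disjoint sum is equidistributed with $\sigma_n y^*$).

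The gap is in how you produce $y$. You assert that $\alpha_E=0$ forces the fundamental function $\phi_E$ to grow sub-polynomially and that $y$ can be found among normalized characteristic functions, i.e.\ that $\phi_E(nN)\le(1+\varepsilon)\phi_E(N)$ for some $N$. This implication goes the wrong way: the Boyd indices are computed from the operator norm of $\sigma_\tau$ on all of $E$, and the known inequality between the lower Boyd index and the lower dilation index of $\phi_E$ is $\alpha_E\le\underline{\gamma}_{\phi_E}$, not the reverse. There are symmetric spaces with $\alpha_E=0$ (indeed with both Boyd indices trivial) whose fundamental function is equivalent to $t^{1/2}$; for such a space $\phi_E(nN)/\phi_E(N)\sim n^{1/2}$ for every $N$ and no characteristic function can serve as $y$. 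The correct (and more elementary --- no Krivine/Maurey-type extraction is involved) step is this: $\tau\mapsto\|\sigma_\tau\|_{E\to E}$ is submultiplicative, so by Fekete's lemma $\alpha_E=\sup_{0<\tau<1}\log\|\sigma_\tau\|/\log\tau$, and since each term of this supremum is nonnegative, $\alpha_E=0$ forces $\|\sigma_{1/n}\|_{E\to E}=1$ for every $n$. Pick $f$ with $\|f\|_E=1$ and $\|\sigma_{1/n}f^*\|_E>1-\delta$ (finitely supported when $E$ is separable), and set $y=\sigma_{1/n}f^*/\|\sigma_{1/n}f^*\|_E$; then $\sigma_n y^*=f^*/\|\sigma_{1/n}f^*\|_E$ has norm at most $(1-\delta)^{-1}\le 1+\varepsilon$. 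The dual case $\beta_E=1$ similarly rests on $\|\sigma_\tau\|=\tau$ for $\tau>1$ rather than on the fundamental function; your deferral of that case is otherwise reasonable.
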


\noindent If $E$ is separable then $x_j$ can be chosen finitely
supported. 

$\cC^E$~denotes the corresponding symmetric ideal of compact operators
on the Hilbert space~$\ell_2 = \ell_2(\N)$, i.e.\ the space of all
compact operators~$x$ such that~$s(x) \in E$, where~$s(x)$ is the step
function such that $$ s(x)(t) = s_k,\ \ k < t \le k+1,\ \ k \ge 0 $$
and~$\{s_k\}_{k \ge 0}$ the sequence of singular numbers (counted with
multiplicities) of the operator~$x$ (see e.g.~\cite{GohbergKrein}).  The
norm in the space~$\cC^E$ is given by~$\|x\|_{\cC^E} := \|s(x)\|_{E}$.
In particular, if~$E = L_p$, then the ideal~$\cC^p = \cC^{L_p}$, $1\le p
< \infty$ stands for the Schatten-von Neumann ideals of compact
operators and~$\cC^\infty$ stands for the ideal of all compact operators
equipped with the operator norm, see~\cite{GohbergKrein}.

Let~$\ell_2^n$ be the subspace in~$\ell_2$ spanned by the first~$n$
standard unit vector basis.  If an element~$B \in \cC^E$ is such
that~$B = B|_{\ell_2^n}$, then we identify~$B$ with its matrix
from~$\M n$.

\begin{pro} 
  \label{phipsi} Let $E$ be a separable symmetric function space and
  $\alpha_E=0$ (respectively, $\beta_E=1$).  For every scalar
  $\varepsilon>0$ and every positive integer $n\in\N$ there exist linear
  operators $\Phi_n$ and $\Psi_n$ such that
  \begin{enumerate}
  \item\label{aaai} $\Phi_n,\Psi_n:\M n\mapsto \M {k_n}$, where
    $\{k_n\}_{n \ge 1}$ is a sequence of positive integers;
  \item\label{aaaii} the operators $\Phi_n$, $\Psi_n$ map diagonal
    (respectively, self-adjoint) matrices to diagonal (respectively,
    self-adjoint) matrices; 
  \item\label{aaaiii} if $\Mf B$, $\Mf{\Phi_n(B)}$ are the Schur
    multiplier associated with the diagonal matrices $B\in\M n$,
    $\Phi_n(B) \in \M {k_n}$ and the function $f$, then $\Psi_n(\Mf
    BX) = \Mf{\Phi_n(B)}\Psi_n(X)$ for every matrix $X\in\M n$;
  \item\label{aaaiv} $\|X\|_{\cC^\infty}\le \|\Psi_n(X)\|_{\cC^E} \le
    (1+\varepsilon) \|X\|_{\cC^\infty}$ (respectively,
    $(1-\varepsilon)\|X\|_{\cC^1}\le \|\Psi_n(X)\|_{\cC^E} \le
    \|X\|_{\cC^1}$) for every matrix $X\in\M n$. \end{enumerate}
\end{pro}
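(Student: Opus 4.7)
The plan is to leverage the preceding proposition to extract disjointly supported, equidistributed vectors in $E$, and then to build $\Phi_n$ and $\Psi_n$ as simple tensor products using those vectors as the ``profile'' in the second factor.

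First I would apply the preceding proposition to produce, for the given $\varepsilon$ and $n$, finitely supported vectors $x_1, \dots, x_n \in E$ with pairwise disjoint supports $S_1, \dots, S_n$, a common distribution, and the estimate~\eqref{trivboydestimates}. Because the $x_j$ are disjointly supported, $\|\sum_j a_j x_j\|_E = \|\sum_j |a_j|\,|x_j|\|_E$ by rearrangement invariance, so I may replace each $x_j$ by $|x_j|$ and assume $x_j \ge 0$. Equidistribution then forces $|S_j| = m$ for a common $m$, and choosing bijections $\sigma_j:\{1,\dots,m\} \to S_j$ yields a single profile $x:\{1,\dots,m\} \to [0,\infty)$ with $x_j \circ \sigma_j = x$ for every $j$.

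Next I would set $k_n := nm$, $D := \mathrm{diag}(x(1),\dots,x(m)) \in \M m$, and define
$$\Phi_n(B) := B \otimes I_m, \qquad \Psi_n(X) := X \otimes D, \qquad B,X \in \M n.$$
Properties~\ref{aaai} and~\ref{aaaii} are then immediate: tensoring with $I_m$ or with $D$ preserves diagonality and, since $D$ is real diagonal, also self-adjointness. For~\ref{aaaiii} I would compute entries in tensor indexing $(j,i),(k,l)$: the diagonal of $\Phi_n(B)$ equals $\lambda_j$ throughout the whole $j$-th block, so both $\Psi_n(\Mf B X)$ and $\Mf{\Phi_n(B)}\Psi_n(X)$ yield the entry $\psi_f(\lambda_j,\lambda_k)\,X_{jk}\,x(i)\,\delta_{il}$.

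The substantive step is~\ref{aaaiv}. The nonzero singular values of $X \otimes D$ form the multiset $\{s_j(X)\,x(i) : 1 \le j \le n,\ 1 \le i \le m\}$, and by rearrangement invariance of $E$ this multiset, regarded as a vector in $E$, has the same norm as the disjoint sum $\sum_{j=1}^n s_j(X)\,x_j$ obtained by placing $s_j(X)\,x(i)$ at position $\sigma_j(i) \in S_j$. Substituting $a_j := s_j(X) \ge 0$ into~\eqref{trivboydestimates}, together with $\max_j s_j(X) = \|X\|_{\cC^\infty}$ (respectively $\sum_j s_j(X) = \|X\|_{\cC^1}$), delivers the two required bounds. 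The main obstacle is therefore the identification of the singular-value sequence of $X \otimes D$ with the $E$-vector $\sum_j s_j(X)\,x_j$; disjointness of the $x_j$ together with rearrangement invariance of $E$ are exactly what legitimizes this identification and lets the one-sided Boyd-index estimate do all the work.
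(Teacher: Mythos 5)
Your construction is the same as the paper's: the paper also takes the disjointly supported equidistributed vectors from the preceding proposition, forms the diagonal matrix $X_0=diag\{x_1^*(k)\}$ (your $D$ is just a permuted version of this), sets $\Phi_n(X)=X\otimes I$ and $\Psi_n(X)=X\otimes X_0$, and proves~\ref{aaaiv} by reducing $\|\Psi_n(X)\|_{\cC^E}$ to $\bigl\|\sum_j s_j x_j\bigr\|_E$ and invoking~\eqref{trivboydestimates}. The only cosmetic differences are that you verify~\ref{aaaiii} entrywise rather than via~\eqref{schurprojections} and identify the singular values of $X\otimes D$ directly instead of conjugating by $\Phi_n(U)$, $\Phi_n(V)$; both routes are correct.
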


\begin{proof} Let $n$ be a fixed positive integer and $\varepsilon>0$
  be a fixed positive scalar. Let $\{x_j\}_{j=1}^n$ be a sequence of
  finitely and disjointly supported vectors, having the same
  distribution such that~\eqref{trivboydestimates} holds.  Let~$X_0$
  be the matrix given by $X_0=diag\{x^*_1(k)\}_{k\ge 1}$, i.e. $X_0$
  is the finite diagonal matrix in $\cC^E$ that corresponds to the
  decreasing rearrangement $x^*_1$ in $E$.  Let~$I$ be the
  identity matrix of the same size as~$X_0$.  We define the linear
  operators~$\Phi_n$ and~$\Psi_n$ by $$ \Phi_n(X) := X \otimes
  I,\ \ \text{and}\ \ \Psi_n(X) := X \otimes X_0,\ \ X\in\M n. $$ The
  claims~\ref{aaai}, \ref{aaaii}, now, follow immediately from the
  definition of $\Phi_n$ and $\Psi_n$ and the claim~\ref{aaaiii}
  follows from~\eqref{schurprojections}.
  
  Let us prove~\ref{aaaiv}. For every matrix $X \in\M n$ there exist
  unitary matrices $U,V$ such that $$ UXV = diag\{s_1, s_2, \ldots,
  s_n\}. $$ Now, it follows from elementary properties of tensors, that
  \begin{align*}
    \Phi_n(U)\Psi_n(X)\Phi_n(V) & = (U\otimes I)(X\otimes X_0)(V\otimes
    I) \cr & = (UXV)\otimes X_0 = \Psi_n(UXV)
    =diag\{s_jX_0\}_{j=1}^n,\cr
  \end{align*} 
  and so
  \begin{align*}
    \|\Psi_n(X)\|_{\cC^E} &= \|\Phi_n(U)\Psi_n(X)\Phi_n(V)\|_{\cC^E} \cr
    & = \|diag\{s_jX_0\}_{j=1}^n\|_{\cC^E} = \Bigl\|\sum_{j=1}^n
    s_jx_j\Bigr\|_E.\cr
  \end{align*}
  Now, the claim in~\ref{aaaiv} for $\alpha_E=0$ (respectively,
  $\beta_E=1$) follows from combining the equality above with the first
  estimate in~\eqref{trivboydestimates} (respectively, the second
  estimate in~\eqref{trivboydestimates}).\end{proof}

\noindent The operators $\Phi_n$, $\Psi_n$ are very similar to those,
constructed in the proof of~\cite[Theorem~4.1]{Ar1978}.

\section{Commutator estimates.}

From now on let $h:\Rl\mapsto\Rl$ be a function with the following
properties \toAlpha
\begin{enumerate}
\item $h(t)\in C^1(\Rl\setminus\{0\})$; \label{FL}
\item\label{hevenness}$h(t)=h(-t)$ when $t\ne 0$, $h(0)\ge 0$; 
\item $h(\cdot)$ is
increasing function on~$(0,\infty)$; \item\label{hinf}%
$h(\pm\infty)=+\infty$; \item $0\le h'(t) / h(t) \le 1$ when
$t\in(0,\infty)$.\label{hmainprop} \label{LL}
\end{enumerate} \toRomans

\begin{pro} \label{ffuncPro} Let $h(t)$ be a function that satisfies the
  conditions~\ref{FL}--\ref{LL} above.  If $f$ is a function defined as
  follows
\begin{equation} 
  \label{ffunc}
  f(t) = \begin{cases} |t|(h(\log |t|))^{-1}, & 
    \text{if $|t|<1$, $t\ne
    0$,} \cr 0, & \text{if
    $t=0$.}\cr \end{cases} \end{equation}
  then $f(t)\in C^1(-1,1)$ and $f'(t)\ge
  0$ for every $t\in(0,1)$. \end{pro}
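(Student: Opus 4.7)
The plan is to exploit the evenness of $h$, which makes $f$ even as well, in order to reduce the analysis to the interval $(0,1)$ and to a single problematic endpoint $t=0$; elsewhere on $(-1,1)\setminus\{0\}$, $f$ is already $C^1$ since $h\in C^1(\Rl\setminus\{0\})$ and $\log|t|\ne 0$. On $(0,1)$ I would introduce the substitution $u:=-\log t = |\log t|>0$, so that $t=e^{-u}$ and, by hypothesis~\ref{hevenness}, $h(\log t)=h(u)$. This converts the behaviour of $f$ as $t\to 0^+$ into asymptotic behaviour of $h(u)$ as $u\to+\infty$, which is exactly the regime in which hypotheses~\ref{hinf} and~\ref{hmainprop} apply.

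With this substitution, a direct chain-rule computation gives
\[
f'(t)\;=\;\frac{1}{h(\log t)}-\frac{h'(\log t)}{h(\log t)^{2}}\;=\;\frac{1}{h(u)}+\frac{h'(u)}{h(u)^{2}},
\]
where I use that $h'$ is odd (differentiating the relation $h(s)=h(-s)$) to flip the sign of $h'(\log t)=-h'(u)$. Since $h$ is strictly increasing on $(0,\infty)$ with $h(0)\ge 0$, one has $h(u)>0$, and hypothesis~\ref{hmainprop} forces $h'(u)\ge 0$ on $(0,\infty)$. Both summands in the formula above are therefore non-negative, yielding $f'(t)\ge 0$ (in fact $>0$) for every $t\in(0,1)$.

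To establish the $C^1$ property on $(-1,1)$, only the behaviour at $t=0$ requires attention. Continuity of $f$ at $0$ is immediate: $f(t)=t/h(u)\to 0$ since $h(u)\to +\infty$ by~\ref{hinf}. The difference quotient $(f(t)-f(0))/t = 1/h(u)$ tends to $0$ as $t\to 0^+$, so $f'(0)=0$; by evenness the same holds from the left. Finally, continuity of $f'$ at $0$ follows from the formula above: the first term $1/h(u)\to 0$ by~\ref{hinf}, and the second term $h'(u)/h(u)^{2}=\bigl(h'(u)/h(u)\bigr)\cdot\bigl(1/h(u)\bigr)$ is dominated by $1/h(u)$ owing precisely to the bound $h'/h\le 1$ of hypothesis~\ref{hmainprop}, hence also tends to $0$.

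The real load-bearing step is the last one. Without the assumption $h'/h\le 1$, one could easily construct $h$ for which the term $h'(u)/h(u)^{2}$ fails to vanish at infinity, destroying the continuity of $f'$ at the origin; this is the one place where~\ref{hmainprop} is used non-trivially and is the main obstacle the hypotheses are engineered to overcome. Everything else amounts to the even-symmetry reduction and a straightforward unpacking of the chain rule.
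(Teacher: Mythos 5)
Your proof is correct and follows essentially the same route as the paper's: reduce to $t\ge 0$ by evenness, compute $f'$ by the chain rule, get $f'(0)=0$ from the difference quotient via $h(\pm\infty)=+\infty$, and bound $0\le f'(t)\le 2(h(\log t))^{-1}\to 0$ using $0\le h'/h\le 1$; your substitution $u=-\log t$ and the explicit oddness of $h'$ merely make transparent the sign bookkeeping that the paper leaves implicit. (One cosmetic slip: $f$ is even because of the $|t|$'s in its definition, not because $h$ is even — the evenness of $h$ is what lets you replace $h(\log t)$ by $h(u)$ with $u>0$.)
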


\begin{proof} 
  The function given in~\eqref{ffunc} is even so it is sufficient to
  consider only the case $t\ge 0$. It follows from the definition of the
  function $f$ that for every $t\in(0,1)$ function $f$ is continuously
  differentiable. To calculate the derivative at zero, we use the
  definition $$
  f'(0)=\lim_{t\rightarrow 0} {f(t)-f(0)\over t-0} =
  \lim_{t\rightarrow 0} (h(\log t))^{-1} \eq{\ref{hinf}} 0.  $$
  In order
  to verify that $f'(t)\rightarrow 0$ when $t\rightarrow +0$, we note
  first that $$
  f'(t) = (h(\log t))^{-1} \biggl(1- {h'(\log t)\over
    h(\log t)}\biggr),\ \ 0<t<1. $$
  Since $h(t)\ge0$ for every
  $t\in\Rl$, together with the property~\ref{hmainprop}, it now follows
  that for every $t\in(0,1)$ $$
  0\le f'(t) \le 2(h(\log
  t))^{-1}\rightarrow 0, \hbox{ as } t\rightarrow+0. $$\end{proof}

\noindent Let matrices $D,V\in\M m$ and $A,B\in\M{2m}$ be defined as
follows \begin{align} \label{DV} D & = diag\{e^{-1},e^{-2},\ldots,
  e^{-m}\}, \cr V & = \{v_{jk}\}_{j,k=1}^m, \cr v_{jk} & = \begin{cases}
    (k-j)^{-1} (e^{-j}+e^{-k})^{-1}, & \text{if $j\ne k$,} \cr 0, &
    \text{ if $j=k$.} \cr \end{cases}, \cr \end{align} and
\begin{equation} \label{AB} A=\left[
    \begin{matrix} 0 & V \cr -V & 0 \cr
    \end{matrix} \right],\ \ \ 
  B=\left[\begin{matrix} D & 0 \cr 0 & -D \cr \end{matrix} \right].  
\end{equation}

\noindent The following proposition provides commutator estimates in the
norm of the ideal of compact operators which are very similar to those
established in~\cite{AdPS2005} and~\cite{McI1978}.

\begin{pro} 
  \label{infestimate} For any function ~$f: \Rl \rightarrow \Rl$ given
  by~\eqref{ffunc}, there exists an absolute constant $K_0$ such
  that for every $m \ge3$ and for every scalar $0< p\le 1$ the
  following estimates hold
  \begin{enumerate} \item $\ds\|[B,A]\|_{\cC^\infty} \le \pi$,
  \item $\ds\|[f(pB),A]\|_{\cC^\infty} \ge {pK_0\over h(m-\log p)} \log
    {m\over2}$. \end{enumerate} \end{pro}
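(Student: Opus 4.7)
The plan is to treat the two bounds separately, using explicit block matrix computation for~(i) and the Schur multiplier identity~\eqref{schurcomm} for~(ii).

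For~(i), a direct block multiplication of the matrices in~\eqref{AB} gives
\[
[B,A] = \begin{pmatrix} 0 & DV+VD \\ DV+VD & 0 \end{pmatrix}.
\]
Because $D$ is diagonal, $(DV+VD)_{jk} = (e^{-j}+e^{-k})\,v_{jk}$, and the choice of $v_{jk}$ in~\eqref{DV} makes this equal to $1/(k-j)$ for $j\ne k$. Thus $DV+VD$ is the $m\times m$ truncation of the discrete Hilbert transform, whose operator norm is at most $\pi$ by Hilbert's inequality. Since a block matrix $\bigl(\begin{smallmatrix}0 & W\\ W & 0\end{smallmatrix}\bigr)$ has the same operator norm as $W$, claim~(i) follows.

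For~(ii), I would apply~\eqref{schurcomm} to $pB$ to obtain $[f(pB),A] = \Mf{pB}([pB,A])$. The eigenvalues of $pB$ are $\pm pe^{-j}$, and since $f$ is even,
\[
\psi_f(pe^{-j},-pe^{-k}) = \frac{f(pe^{-j})-f(pe^{-k})}{p(e^{-j}+e^{-k})}.
\]
Inserting $f(pe^{-j}) = pe^{-j}/h(j-\log p)$ (using that $h$ is even) and splitting the cases $j<k$ and $j>k$, I would combine the monotonicity of $h$ with the elementary estimates $e^{-j}+e^{-k}\le 2\max(e^{-j},e^{-k})$ and $1-e^{-|j-k|}\ge 1-e^{-1}$ for $|j-k|\ge 1$ to show that every entry of the top-right $m\times m$ block of $[f(pB),A]$ is positive and bounded below by
\[
\frac{K_0\, p}{|k-j|\, h(\min(j,k)-\log p)}
\]
for an absolute constant $K_0>0$.

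To convert this entrywise estimate into the required operator-norm bound, I would test $[f(pB),A]$ against the unit vectors $u = m^{-1/2}(0,\ldots,0,1,\ldots,1)^T$ and $v = m^{-1/2}(1,\ldots,1,0,\ldots,0)^T$ (each block of length $m$). Since all relevant entries are nonnegative and $h(\min(j,k)-\log p)\le h(m-\log p)$,
\[
\|[f(pB),A]\|_{\cC^\infty} \ge \langle[f(pB),A]u,v\rangle \ge \frac{K_0\, p}{m\, h(m-\log p)}\sum_{1\le j\ne k\le m}\frac{1}{|j-k|},
\]
and the identity $\sum_{j\ne k}1/|j-k| = 2\sum_{d=1}^{m-1}(m-d)/d \ge m\log(m/2)$ delivers the stated bound. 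The main obstacle is the pointwise estimate on the entries of the off-diagonal block: one must verify that all contributions have the same sign and that the asymmetry between $j<k$ and $j>k$ (which determines whether $j$ or $k$ enters the argument of $h$) does not erode the $1/h(m-\log p)$ factor; without this uniform positivity the cross-cancellations between entries could wipe out the logarithmic blow-up coming from the Hilbert-matrix structure.
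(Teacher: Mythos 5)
Your proposal is correct and follows essentially the same route as the paper: part (i) via the Hilbert matrix, and part (ii) by reducing to the off‑diagonal block $S=f(pD)V-Vf(pD)$, establishing entrywise positivity and the lower bound $s_{jk}\gtrsim p\,h(m-\log p)^{-1}|k-j|^{-1}$, and then testing against normalized all‑ones vectors to accumulate the $\log(m/2)$ factor. The only cosmetic differences are that you route (ii) through the identity~\eqref{schurcomm} rather than direct block multiplication (the resulting entries coincide), and your lower bound with $h(\min(j,k)-\log p)$ is in fact slightly sharper than the paper's bound with $h(\max(j,k)-\log p)$; both suffice after majorizing by $h(m-\log p)$.
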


\begin{proof} 
  The proof of the first claim is based on the norm estimates of the
  Hilbert matrix, see~\cite[the proof of Lemma~3.6]{AdPS2005}.  Hence,
  we need to establish only the second one.  Let us first note, since
  the function $f$ is even, it follows from definition of matrices
  $A$, $B$ that $$ f(pB)A - Af(pB) = \left[\begin{matrix} 0 & f(pD)V -
      Vf(pD)\cr f(pD)V - Vf(pD) & 0\cr \end{matrix} \right], $$ so
  \begin{equation} 
    \label{aaainfest} \|[f(pB),A]\|_{\cC^\infty} =
    \|[f(pD), V]\|_{\cC^\infty}.
  \end{equation} 
  If $S=\{s_{jk}\}_{j,k=1}^m = f(pD)V - Vf(pD)\in\M m$, then $$
  s_{kj}=s_{jk} = {f(pe^{-j}) - f(pe^{-k})\over(e^{-j}+e^{-k})(k-j)}
  \ge 0,\ \ 1\le j, k\le m. $$ If~$1\le j<k\le m$, then, since
  functions $h(t)$ and $e^t$ are monotone, we have
  \begin{align*} 
    s_{jk} & = \biggl({pe^{-j}\over h(j-\log p)} - { pe^{-k} \over
      h(k-\log p)}\biggr) (e^{-j} + e^{-k})^{-1} (k-j)^{-1}\cr & \ge
    {p(e^{-j}-e^{-k})\over h(k-\log p)} (2e^{-j}(k-j))^{-1}\cr &\ge
    {p(1-e^{-1})\over 2h(k-\log p) (k-j)} \ge {p(1-e^{-1})\over
      2h(m-\log p) (k-j)}.\cr 
  \end{align*} 
  Now, using $\sum_{j=1}^{k-1} {1\over j} \ge \log k$, we have $$
  \sum_{j=1}^m s_{jk} \ge \sum_{j=1}^{k-1} s_{jk} \ge
  {p(1-e^{-1})\over 2 h(m-\log p)} \sum_{j=1}^{k-1} {1\over k-j} \ge
  {p(1-e^{-1})\over 2h(m-\log p)} \log k. $$

  Finally, letting $x=(m^{-1/2}, m^{-1/2},\ldots , m^{-1/2})\in \Cx^m$,
  we obtain
  \begin{align*}
    \|S\|_{\cC^\infty} &\, \ge \langle Sx, x \rangle = \frac 1m
    \sum_{j,k = 1}^m s_{jk} \ge \frac 1m \frac {p ( 1 - e^{-1})}{2\,
      h(m - \log p)} \, \sum_{k =1}^m \log k \cr \ge &\, \frac 1m
    \frac{p (1  - e^{-1})} { 2\, h(m - \log p)} \, \sum_{k = [ m /2]}^m
    \log k \cr \ge &\, \frac 1m \frac {p (1 - e^{-1})}{2\, h(m - \log
        p)} \, \frac m 2 \log \frac m 2.
  \end{align*}
  Setting $K_0 = (1-e^{-1})/4$, we have $$ \|[f(pD),V]\|_{\cC^\infty} =
  \|S\|_{\cC^\infty} \ge {pK_0\over h(m-\log p)} \log {m\over2}. $$
  which, together with~\eqref{aaainfest}, completes the proof.
\end{proof}


\noindent 
Together with~\eqref{schurcomm}, Proposition~\ref{infestimate} provides
a lower estimate for the operator norm of Schur multiplier associated
with the function $f$, given by~\eqref{ffunc}, and diagonal matrix $pB$
given by~\eqref{DV} and~\eqref{AB} for every scalar $0<p\le1$ and every
integer $m\ge3$.  Now we extend that lower estimate to a larger class
of ideals.

\begin{pro} 
  \label{Eestimate} Let $E$ be a separable symmetric function space
  with trivial Boyd indices. For every $m\ge3$, let $A_m, B_m\in\M{2m}$
  be given by~\eqref{DV} and~\eqref{AB}, $\Phi_{2m}$, $\Psi_{2m}$ be the
  operators from the Proposition~\ref{phipsi} for the $\varepsilon=1/2$.
  There exists an absolute constant $K_1$ such that for every scalar
  sequence $0<p_m\le1$, and for the sequence of the diagonal matrices
  $W_m = \Phi_{2m}(p_mB_m)\in\M{k_m}$ the following estimate holds $$
  \|\Mf{W_m}\|_{\cC^E\mapsto\cC^E} \ge {K_1\over h(m-\log p_m)} \log
  {m\over2},\ \ m\ge3, $$ where~$f: \Rl \rightarrow \Rl$ is an
  arbitrary function given by~\eqref{ffunc}.
\end{pro}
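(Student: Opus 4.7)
My plan is to chain together the commutator identity~\eqref{schurcomm}, the intertwining property of Proposition~\ref{phipsi}(iii), and the $\cC^\infty$ bounds of Proposition~\ref{infestimate}. Set $Y_m := [p_mB_m, A_m] \in \M{2m}$. By~\eqref{schurcomm}, $\Mf{p_mB_m}(Y_m) = [f(p_mB_m), A_m]$, and Proposition~\ref{phipsi}(iii) applied with $B = p_mB_m$ yields
$$\Psi_{2m}\bigl([f(p_mB_m), A_m]\bigr) = \Mf{W_m}\bigl(\Psi_{2m}(Y_m)\bigr).$$
Consequently, estimating $\|\Mf{W_m}\|_{\cC^E\mapsto\cC^E}$ from below reduces to bounding $\|\Psi_{2m}([f(p_mB_m), A_m])\|_{\cC^E}$ from below and $\|\Psi_{2m}(Y_m)\|_{\cC^E}$ from above.

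In the sub-case $\alpha_E = 0$, I would invoke the first alternative of Proposition~\ref{phipsi}(iv) with $\varepsilon = 1/2$: the numerator is at least $\|[f(p_mB_m), A_m]\|_{\cC^\infty}$, and the denominator is at most $\tfrac{3}{2}\|Y_m\|_{\cC^\infty}$. Inserting Proposition~\ref{infestimate}(i), (ii) gives
$$\|\Mf{W_m}\|_{\cC^E\mapsto\cC^E} \;\ge\; \frac{p_m K_0\,\log(m/2)/h(m - \log p_m)}{\tfrac{3}{2}\, p_m\pi} \;=\; \frac{2K_0}{3\pi\,h(m - \log p_m)}\,\log\frac{m}{2},$$
so $K_1 := 2K_0/(3\pi)$ is admissible in this sub-case.

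The sub-case $\beta_E = 1$ is more delicate. Here the second alternative of Proposition~\ref{phipsi}(iv) involves $\cC^1$-norms, and the direct analogue breaks down because $\|Y_m\|_{\cC^1}$ grows with $m$: an explicit block computation shows that $[B_m, A_m]$ has off-diagonal blocks $DV+VD$, whose entries simplify to $(k-j)^{-1}$ for $j\ne k$, producing the $m\times m$ skew-symmetric discrete Hilbert matrix whose trace norm is of order $m$. The natural remedy is trace duality: since $f$ is real, the Schur symbol $\psi_f(\lambda, \mu)$ is real and symmetric, so $\Mf{W_m}$ is self-adjoint in the trace pairing, whence
$$\|\Mf{W_m}\|_{\cC^E\mapsto\cC^E} \;=\; \|\Mf{W_m}\|_{\cC^{E^\times}\mapsto\cC^{E^\times}}.$$
Because $\beta_E = 1$ is equivalent to $\alpha_{E^\times} = 0$, the argument of the first sub-case, rerun on the K\"othe dual $E^\times$ with operators $\Phi^{E^\times}_{2m}, \Psi^{E^\times}_{2m}$ from Proposition~\ref{phipsi} applied to $E^\times$, produces the required lower bound after minor adjustment of the absolute constant.

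The $\alpha_E = 0$ branch is essentially a one-line chain of inequalities once the intertwining is invoked; the main obstacle is the $\beta_E = 1$ branch, and specifically the identification of the Schur-multiplier norms on $\cC^E$ and on the K\"othe dual ideal $\cC^{E^\times}$. Once that duality is in hand, the $\beta_E = 1$ case collapses onto a copy of the $\alpha = 0$ computation on $E^\times$.
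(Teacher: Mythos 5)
Your $\alpha_E=0$ branch is correct and is essentially the paper's argument (the paper works with $X^\infty_m=[p_mB_m,\tfrac1{p_m}A_m]$ rather than $Y_m$, a harmless rescaling), and you have correctly diagnosed why the same test matrix cannot be reused when $\beta_E=1$: the off-diagonal block of $[B_m,A_m]$ is the Hilbert-type matrix with entries $(k-j)^{-1}$, whose trace norm grows with $m$. But your proposed repair of the $\beta_E=1$ case has a genuine gap. Passing to the K\"othe dual changes the \emph{space} but not the \emph{matrix}: the proposition is about $\Mf{W_m}$ with $W_m=\Phi_{2m}(p_mB_m)$, where $\Phi_{2m}$, $\Psi_{2m}$ are built from disjointly supported vectors in $E$ exhibiting the $\ell_1$-behaviour of \eqref{trivboydestimates}. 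Rerunning the $\alpha=0$ computation on $E^\times$ uses different operators $\Phi^{E^\times}_{2m}$, $\Psi^{E^\times}_{2m}$ and therefore produces a lower bound for the Schur multiplier associated with the \emph{different} diagonal matrix $\Phi^{E^\times}_{2m}(p_mB_m)$, acting on a matrix algebra of a different size $k^{E^\times}_m\ne k_m$; the intertwining property \ref{aaaiii} only ever relates $\Psi^{E^\times}_{2m}$ to $\Mf{\Phi^{E^\times}_{2m}(B)}$, never to $\Mf{W_m}$. You supply no bridge between the two multipliers (one would need something like the statement that $\|\Mf{B\otimes I_d}\|_{\cC^F\mapsto\cC^F}$ is independent of $d$, which is not proved here and is not obvious in a general ideal). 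There are further unaddressed technicalities in this route: $E^\times$ need not be separable, so Proposition~\ref{phipsi} as stated (which needs finitely supported vectors) does not automatically apply to it, and the identity $\|\Mf{W_m}\|_{\cC^E\mapsto\cC^E}=\|\Mf{W_m}\|_{\cC^{E^\times}\mapsto\cC^{E^\times}}$ on finite matrices implicitly uses that $E$ embeds isometrically into $E^{\times\times}$.

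The paper's fix is both simpler and stays entirely inside the given construction: duality is invoked only at the level of the fixed finite algebra $\M{2m}$, where $(\cC^1)^*=\cC^\infty$ is elementary and the symbol $\psi_f$ is real and symmetric, giving $\|\Mf{p_mB_m}\|_{\cC^1\mapsto\cC^1}=\|\Mf{p_mB_m}\|_{\cC^\infty\mapsto\cC^\infty}$. One then extracts a near-optimizing test matrix $X^1_m\in\M{2m}$ for the $\cC^1$-norm (losing only a factor $2$), and feeds $\Psi_{2m}(X^1_m)$ into the \emph{second} alternative of Proposition~\ref{phipsi}\ref{aaaiv}, which transfers $\cC^1$-ratios to $\cC^E$-ratios for the very multiplier $\Mf{W_m}$ in question. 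If you replace your K\"othe-duality detour by this finite-dimensional duality step, the $\beta_E=1$ case closes with $K_1=K_0/(4\pi)$ and the proof is complete.
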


\begin{proof} Letting $$ X^\infty_m = [p_mB_m, {1\over p_m} A_m ],\ \
  m\ge3, $$ we infer from Proposition~\ref{infestimate} and
  from~\eqref{schurcomm} that for every $m\ge3$ $$
  \|X^\infty_m\|_{\cC^\infty} \le \pi, $$
\begin{align*} \|\Mf{p_mB_m} (X^\infty_m)\|_{\cC^\infty} & =
  \|[f(p_mB_m),{1\over p_m} A_m]\|_{\cC^\infty} \cr & \ge {K_0\over
    h(m-\log p_m)} \log {m\over 2}.\cr \end{align*} It follows from the
definition of Schur multiplication and duality that 
\begin{align*}
  \|\Mf{p_mB_m}\|_{\cC^1\mapsto\cC^1} = &\,
  \|\Mf{p_mB_m}\|_{\cC^\infty\mapsto\cC^\infty} \cr \ge &\, {K_0\over\pi
    h(m-\log p_m)} \log {m\over2},\ \ m\ge3.
\end{align*}
The last estimate implies that there exists a sequence of
$X_m^1\in\M{2m}$ such that $$
{\|\Mf{p_mB_m}(X^1_m)\|_{\cC^1}\over
  \|X^1_m\|_{\cC^1}} \ge {K_0\over 2\pi\,h(m-\log p_m)} \log {m\over2},\ 
\ m\ge3. $$
Suppose now, that~$\alpha_E=0$ and set~$X_m =
\Psi_{2m}(X^\infty_m)$ for every $m\ge3$. It follows from
Proposition~\ref{phipsi} that, for every $m\ge3$, $W_m$ is a finite
diagonal self-adjoint matrix such that
\begin{align*}
  \|\Mf{W_m}\|_{\cC^E\mapsto\cC^E} & \ge {\|\Mf{W_m}(X_m)\|_{\cC^E}\over
    \|X_m\|_{\cC^E}}=
  {\|\Psi_{2m}\{\Mf{p_mB_m}(X^\infty_m)\}\|_{\cC^E}\over
    \|\Psi_{2m}(X^\infty_m)\|_{\cC^E}} \cr & \ge
  {2\,\|\Mf{p_mB_m}(X^\infty_m)\|_{\cC^\infty}\over 3
    \|X^\infty_m\|_{\cC^\infty}} \ge {2K_0\over 3\pi h(m-\log p_m)} \log
  {m\over2}. \cr
\end{align*} 
If we put $K_1 = 2K_0/(3\pi)$, that completes the proof of the case
$\alpha_E=0$.  The only difference in treating the case $\beta_E=1$ is
that we need to use $X^1_m$ instead of $X^\infty_m$ in the above
estimates.\end{proof}

\noindent The following proposition proves that if a function~$f: \Rl
\rightarrow \Rl$ is given by~\eqref{ffunc} and the multipliers~$\Mf
{W_m}$ are not uniformly bounded in~$\cC^E$, then this function is not
commutator bounded in the sense of~\cite{KiShI}.

\begin{pro} 
  \label{tocomm} Let $E$ be a separable symmetric function space.  If
  $f$ is a $C^1$-function and $W_m\in\M{k_m}$ is a sequence of
  diagonal matrices ($m\ge3$) such that \begin{equation}
    \|\Mf{W_m}\|_{\cC^E\mapsto\cC^E} \rightarrow \infty,
    \label{consttending} \end{equation}
  then there exist self-adjoint operators $W$,
  $X$, acting on~$\ell_2$, such that $$
  [W,X]\in\cC^E,\ \ \ [f(W),X]\notin\cC^E. $$
  If, in addition, the norms
  $\|W_m\|_{\cC^\infty}$ are uniformly bounded, then $W(Dom\ X)\subseteq
  Dom\ X$, and if the following series converges $$
  \sum_{m\ge3}
  \|W_m\|_{\cC^E}, $$
  then operator $W$ belongs to $\cC^E$ and $$
  \|W\|_{\cC^E} \le \sum_{m\ge3} \|W_m\|_{\cC^E}. $$
\end{pro}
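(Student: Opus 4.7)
The plan is a block-diagonal construction. By the hypothesis~\eqref{consttending}, I pass to a subsequence (relabeled) $\{W_{m_k}\}$ and choose matrices $Y_k\in\M{k_{m_k}}$ with $\|Y_k\|_{\cC^E}\le 1$ and $\|\Mf{W_{m_k}}(Y_k)\|_{\cC^E}\ge\rho_k$ for some $\rho_k\to\infty$. From each $Y_k$ I will produce a self-adjoint matrix $X_k\in\M{k_{m_k}}$ such that $\|[W_{m_k},X_k]\|_{\cC^E}$ is uniformly bounded and $\|[f(W_{m_k}),X_k]\|_{\cC^E}\ge\rho_k/2$; this is the technical step, handled in the next paragraph. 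After a further refinement choose scalars $c_k>0$ with $\sum_k c_k<\infty$ and $c_k\rho_k\to\infty$, and set
\begin{equation*}
  W:=\bigoplus_{k\ge 1} W_{m_k},\qquad X:=\bigoplus_{k\ge 1} c_k X_k
\end{equation*}
on $\bigoplus_k \ell_2(k_{m_k})\cong \ell_2$. Both operators are self-adjoint. The commutator $[W,X]=\bigoplus_k c_k[W_{m_k},X_k]$ lies in $\cC^E$ by absolute convergence of $\sum_k c_k\|[W_{m_k},X_k]\|_{\cC^E}$, whereas the $k$-th block of $[f(W),X]$ equals $c_k[f(W_{m_k}),X_k]$, with $\cC^E$-norm exceeding $c_k\rho_k/2\to\infty$, so $[f(W),X]\notin\cC^E$.

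The technical step is extracting the self-adjoint commutator witness $X_k$ from the Schur-multiplier witness $Y_k$. Identity~\eqref{schurcomm} suggests taking $X_k$ as a preimage of $Y_k$ under $[W_{m_k},\cdot]$, but two issues must be addressed. First, $\Mf{W_{m_k}}$ annihilates entries $(j,\ell)$ with $(W_{m_k})_{jj}=(W_{m_k})_{\ell\ell}$; I therefore replace $Y_k$ by $(I-E_k)Y_k$, where $E_k$ is the block-diagonal conditional expectation associated with the spectral projections of $W_{m_k}$. Since $E_k$ is a contraction on every symmetric ideal, this preserves $\Mf{W_{m_k}}(Y_k)$ and at most doubles the $\cC^E$-norm. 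Second, to guarantee $X_k=X_k^*$, I split $(I-E_k)Y_k$ into Hermitian and anti-Hermitian parts, retain the one whose $\Mf{W_{m_k}}$-image has norm at least $\rho_k/2$ (one of them must, by the triangle inequality), and if needed multiply by $i$ so that the surviving piece $Z_k$ is anti-Hermitian. Defining $(X_k)_{j\ell}:=(Z_k)_{j\ell}/((W_{m_k})_{jj}-(W_{m_k})_{\ell\ell})$ on the support of $Z_k$ and zero elsewhere produces a self-adjoint $X_k$ with $[W_{m_k},X_k]=Z_k$, and then~\eqref{schurcomm} delivers the required lower bound on $\|[f(W_{m_k}),X_k]\|_{\cC^E}$.

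The supplementary statements follow from the block-diagonal structure. If $\|W_m\|_{\cC^\infty}$ is uniformly bounded, then $W$ is a bounded self-adjoint operator; for $v=\sum_k v_k\in Dom\ X$ the identity $X_k W_{m_k}v_k=[X_k,W_{m_k}]v_k+W_{m_k}X_k v_k$ combined with the continuous embedding $\cC^E\hookrightarrow\cC^\infty$ and the uniform bound $\|[W_{m_k},X_k]\|_{\cC^E}\le 2$ gives $\sum_k \|c_k X_k W_{m_k}v_k\|^2<\infty$, so $Wv\in Dom\ X$. Finally, if $\sum_m\|W_m\|_{\cC^E}<\infty$, the triangle inequality in $\cC^E$ applied to the block-diagonal sum yields $\|W\|_{\cC^E}\le\sum_k\|W_{m_k}\|_{\cC^E}\le\sum_m\|W_m\|_{\cC^E}$. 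The main obstacle is the construction of the $X_k$ in the second paragraph: the passage to the effective off-diagonal and the Hermitian/anti-Hermitian splitting must be performed simultaneously so that self-adjointness is preserved without losing more than a constant factor in the relevant commutator-norm ratio.
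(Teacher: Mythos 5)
Your proposal is correct and follows essentially the same route as the paper: extract witnesses along a subsequence, kill the block-diagonal part via the spectral conditional expectation (a contraction, so the norm at most doubles and the multiplier image is unchanged), divide the remaining entries by the eigenvalue differences to obtain a self-adjoint $X_k$ with $[W_{m_k},X_k]$ equal to the modified witness, rescale, and take direct sums. Your Hermitian/anti-Hermitian splitting is just a more explicit justification of the paper's (unargued) assertion that the witnesses may be taken self-adjoint; the remaining verifications coincide with the paper's.
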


\def\x#1{X^{\scriptscriptstyle(#1)}_r} 

\begin{proof} It follows from~\eqref{consttending} that there exists a
  subsequence of positive integers $m_r$ ($r\ge1$) and a sequence of
  self-adjoint matrices $\x1\in\M{k'_r}$ such that 
  \begin{equation} \label{aablowestimate} 
    \|\Mf{W'_r}(\x1)\|_{\cC^E}
    \ge 2r^3 \|\x1\|_{\cC^E},\ \ r\ge1,
  \end{equation} 
  where we let, for brevity, $k'_r=k_{m_r}$ and
  $W'_r=W_{m_r}\in\M{k'_r}$.  Let $r\ge1$ be fixed, let
  $\{\lambda_j\}_{j=1}^{k'_r}$ be the sequence of eigenvalues of the
  matrix $W'_r$, and let $\{P_j\}_{j=1}^{k'_r}$ be the collection of
  corresponding one-dimensional spectral projections.
  For~$\lambda\in\Rl$, we set $$ Q_\lambda = \sum_{\ss 1\le j\le
    k'_r\atop\ss\lambda_j=\lambda} P_j. $$ There are only a finite
  number of non-zero projections among
  $\{Q_\lambda\}_{\lambda\in\Rl}$, let us denote them as
  $\{Q_j\}_{j=1}^s$, $1\le s \le k'_r$ and the corresponding sequence
  of eigenvalues as $\{\lambda'_j\}_{j=1}^s$, the scalars $\lambda'_j$
  are mutually distinct.  We consider the self-adjoint matrices $$
  \hat X_r = \sum_{j=1}^s Q_j\x1Q_j,\ \ \text{and}\ \ \x2=\x1-\hat
  X_r. $$ It follows from~\eqref{schurprojections} that (recall
  that~$\psi_f(\lambda, \lambda) = 0$)
  \begin{align*} 
    \Mf{W'_r}(\hat X_r) & = \sum_{1\le j,l\le k'_r}
    \psi_f(\lambda_j,\lambda_l) P_j\hat X_rP_l \cr & = \sum_{t=1}^s
    \sum_{1\le j, l\le k'_r} \psi_f(\lambda_j,\lambda_l)
    P_jQ_t\x1Q_tP_l \cr & = \sum_{t=1}^s \sum_{\ss 1\le j, l\le k'_r
      \atop \ss \lambda_j=\lambda_l=\lambda'_t}
    \psi_f(\lambda_t,\lambda_t) Q_t\x1Q_t = 0, \cr 
  \end{align*}
  and so
  \begin{equation}
    \label{OneTwoEq}
    \Mf{W'_r}(\x2) = \Mf{W'_r}(\x1). 
  \end{equation}
  Now, noting that~$\|\hat X_r\|_{\cC^E} \le \|\x1\|_{\cC^E}$
  (see~\cite[Theorem~III.4.2]{GohbergKrein}) and,
  hence $\|\x2\|_{\cC^E}\le 2\|\x1\|_{\cC^E}$, $\x2$, we infer
  from~\eqref{OneTwoEq} and~\eqref{aablowestimate}
  \begin{equation} 
    \label{abblowestimate} \| \Mf{W'_r}(\x2)\|_{\cC^E}
    \ge r^3\|\x2\|_{\cC^E}.
 \end{equation} 
 We set $$ \x3 = \sum_{1\le j,l\le k'_r} \lambda_{jl} P_j\x2P_l,
 $$ where $$ \lambda_{jl} = \begin{cases} \ds 0, &
   \text{$\lambda_j=\lambda_l$,}\cr \ds {-i\over \lambda_j-\lambda_l},
   & \text{$\lambda_j\ne\lambda_l$.}\cr \end{cases} $$ The matrix
 $\x3$ is self-adjoint and
 \begin{align} \label{aabmakecomm} \x2 & = \sum_{1\le j, l\le k'_r}
   P_j\x2P_l = i \sum_{1\le j, l\le k'_r} \lambda_{jl}
   (\lambda_j-\lambda_l)P_j\x2P_l \cr & = i \sum_{1\le j, l\le k'_r}
   \lambda_{jl} P_j(W'_r\x2-\x2W'_r)P_l \cr & = i \biggl[ W'_r ,
   \sum_{1\le j, l\le k'_r} \lambda_{jl} P_j\x2P_l\biggr] =
   i[W'_r,\x3]. \cr
 \end{align}
 Finally, we let 
 \begin{equation} 
   \label{aabnormirovka} X_r =
   r^{-2}\|\x2\|_{\cC^E}^{-1} \x3.
 \end{equation} 
 For every $r\ge1$ we have constructed so far the finite self-adjoint
 matrices $W'_r$, $X_r$ such that
 \begin{align} \label{aabfirstcomm} \|[W'_r, X_r]\|_{\cC^\infty} & \le
   \|[W'_r, X_r]\|_{\cC^E} \eq{\eqref{aabnormirovka}}
   r^{-2}\|\x2\|_{\cC^E}^{-1} \|[W'_r, \x3]\|_{\cC^E} \cr &
   \eq{\eqref{aabmakecomm}} r^{-2}\|\x2\|_{\cC^E}^{-1} \|\x2\|_{\cC^E}
   = {1\over r^2}\cr
 \end{align}
 and
 \begin{align} \label{aabsecondcomm} \|[f(W'_r), X_r] \|_{\cC^E} &
   \eq{\eqref{aabnormirovka}} r^{-2}\|\x2\|_{\cC^E}^{-1}\|[f(W'_r),
   \x3]\|_{\cC^E} \cr & \eq{\eqref{schurcomm}}
   r^{-2}\|\x2\|_{\cC^E}^{-1} \|\Mf{W'_r}([W'_r,\x3])\|_{\cC^E} \cr &
   \eq{\eqref{aabmakecomm}} r^{-2}\|\x2\|_{\cC^E}^{-1}
   \|\Mf{W'_r}(\x2)\|_{\cC^E} \cr & \geq{\eqref{abblowestimate}}
   r\|\x2\|_{\cC^E}^{-1} \|\x2\|_{\cC^E} \ge r. \cr
\end{align}

\noindent Now, we set $\H=\bigoplus_{r\ge1}\Cx^{k'_r}$,
$X=\bigoplus_{r\ge1} X_r$ and $W=\bigoplus_{r\ge1} W'_r$. Recall, that
by the definition we have 
\begin{align*} \H & = \{\{\xi_r\}_{r\ge1}:\ \ 
  \xi_r\in\Cx^{k'_r},\ \sum_{r\ge1} \|\xi_r\|^2 <\infty\},\cr Dom\ X & =
  \{ \xi=\{\xi_r\}_{r\ge1}\in\H:\ \ X(\xi) =
  \{X_r(\xi_r)\}_{r\ge1}\in\H\}, \cr Dom\ W & =
  \{ \xi=\{\xi_r\}_{r\ge1}\in\H:\ \ W(\xi) =
  \{W_r(\xi_r)\}_{r\ge1}\in\H\}.\cr
\end{align*} 
$W$, $X$ are self-adjoint
operators, acting on the separable Hilbert space $\H$ and $$
\|[W,X]\|_{\cC^E} \le \sum_{r\ge1}
\|[W'_r,X_r]\|_{\cC^E}\leq{\eqref{aabfirstcomm}} \sum_{r\ge1} {1\over r^2}
< \infty, $$
$$
\|[f(W),X]\|_{\cC^E} \ge \max_{r\ge1}
\|[f(W'_r),X_r]\|_{\cC^E} \eq{\eqref{aabsecondcomm}} \infty.  $$
If we
assume that $$
\sum_{m\ge3} \|W_m\|_{\cC^E} < \infty, $$
then $$
\|W\|_{\cC^E} \le \sum_{r\ge1} \|W'_r\|_{\cC^E} \le \sum_{m\ge3}
\|W_m\|_{\cC^E} <\infty. $$
If we assume that
$\sup_{m\ge1}\|W_m\|_{\cC^\infty}\le M<\infty$, then,
by~\eqref{aabfirstcomm}, for every $\xi=[\xi_r]_{r\ge1}\in Dom\ X$,
\begin{align*}
  \biggl(\sum_{r\ge1} \|X_r(W'_r(\xi_r))\|^2\biggr)^{1\over2} & =
  \biggl(\sum_{r\ge1} \|W'_r(X_r(\xi_r)) - [W'_r,X_r](\xi_r)
  \|^2\biggr)^{1\over2} \cr & \le M \biggl(\sum_{r\ge1}
  \|X_r(\xi_r)\|^2\biggr)^{1\over2} \cr & + \sup_{r\ge1}
  \|[W'_r,X_r]\|_{\cC^\infty} \biggl(\sum_{r\ge1}
  \|\xi_r\|^2\biggr)^{1\over2} < \infty.\cr
\end{align*} 
Hence $W(\xi)\in Dom\ X$.  The claim is proved.
\end{proof}

\noindent It follows from Propositions~\ref{Eestimate}
and~\ref{tocomm} that any function~$f: \Rl \rightarrow \Rl$ given
by~\eqref{ffunc} with the function~$h$ satisfying the
condition~$\frac{\log (m/2)}{h(m - \log p_m)} \rightarrow \infty$,
as~$m \rightarrow \infty$ (here~$\{p_m\}_{m\ge 0}$ is some scalar
sequence satisfying~$0< p_m \le 1$) is not commutator bounded in any
separable symmetrically normed ideal with trivial Boyd indices.  In
other words, there exist self-adjoint operators $W$, $X$, acting on a
separable Hilbert space~$\H$, such that $[W, X]\in\cC^E$ but $[f(W),
X]\notin\cC^E$.  We shall now show how further adjustments to the
choice of the function~$h$ and the sequence~$\{p_m\}_{m\ge0}$ can be
made in order to guarantee that the operator~$W$ above belongs
to~$\cC^E$.  First, we need the following auxiliary results.


\begin{pro} 
  \label{fchi} For every $\varepsilon>0$ there exists a function
  $\chi_\varepsilon$ such that 
  \begin{enumerate}
  \item $\chi_\varepsilon\in C^1(\Rl)$, 
  \item $\chi_\varepsilon(t) = 0$, if $t\le 0$, 
  \item $\chi_\varepsilon(t) = 1$, if $t\ge 1$,
  \item $0\le \chi'_\varepsilon \le 1+\varepsilon$.
  \end{enumerate}
\end{pro}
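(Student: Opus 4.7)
The plan is to build $\chi_\varepsilon$ as the antiderivative of a suitable nonnegative continuous bump $\phi$ supported in $[0,1]$ with $\int_0^1 \phi = 1$ and $\sup \phi \le 1+\varepsilon$. Since the derivative $\chi_\varepsilon'=\phi$ has to integrate to $1$ over an interval of length $1$, its supremum cannot be strictly smaller than $1$, so the bound $1+\varepsilon$ is sharp up to $\varepsilon$; this tells me a trapezoidal profile of height $1+\varepsilon$ is the natural candidate.

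Concretely, I would set $\delta = \varepsilon/(1+\varepsilon)\in (0,1)$ and define the continuous piecewise-linear function
\begin{equation*}
  \phi(t) = \begin{cases}
    0, & t\le 0 \text{ or } t\ge 1,\\
    (1+\varepsilon)\, t/\delta, & 0\le t\le \delta,\\
    1+\varepsilon, & \delta\le t\le 1-\delta,\\
    (1+\varepsilon)\,(1-t)/\delta, & 1-\delta\le t\le 1.
  \end{cases}
\end{equation*}
A direct area calculation gives $\int_{\Rl}\phi = (1-\delta)(1+\varepsilon) = 1$. Then I would put
\begin{equation*}
  \chi_\varepsilon(t) := \int_{-\infty}^t \phi(s)\,ds, \qquad t\in\Rl.
\end{equation*}

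The verification is routine. Since $\phi$ is continuous on $\Rl$ and vanishes at the transition points $0$ and $1$, $\chi_\varepsilon$ is $C^1(\Rl)$ with $\chi_\varepsilon' = \phi$. The vanishing of $\phi$ on $(-\infty,0]$ gives $\chi_\varepsilon(t)=0$ for $t\le0$, while $\int_0^1\phi=1$ together with the vanishing of $\phi$ on $[1,\infty)$ gives $\chi_\varepsilon(t)=1$ for $t\ge 1$. By construction $0 \le \phi \le 1+\varepsilon$ pointwise, so $0 \le \chi_\varepsilon' \le 1+\varepsilon$.

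There is no genuine obstacle here; the only point one must respect is that $\phi$ must vanish at the endpoints of its support so that $\chi_\varepsilon'$ is continuous at $t=0$ and $t=1$ (otherwise one would only get a Lipschitz, not $C^1$, function), which is exactly why the trapezoidal shape (as opposed to a rectangle of height $1$ on $[0,1]$) is needed. If one instead preferred a $C^\infty$ cutoff, the same idea works by mollifying $\phi$ by a small enough bump so that the supremum is preserved to within $\varepsilon$, but the piecewise-linear $\phi$ above already meets all four conditions of the proposition.
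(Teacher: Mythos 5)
Your construction is correct and is in fact identical to the paper's: the authors also integrate a trapezoidal bump of height $1+\varepsilon$ supported on $[0,1]$ with plateau $[\varepsilon/(1+\varepsilon),\,1/(1+\varepsilon)]$, which is exactly your $[\delta,1-\delta]$ with $\delta=\varepsilon/(1+\varepsilon)$. Your verification of the area and of the $C^1$ property matches theirs.
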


\begin{proof} Let~$\xi_\varepsilon(t)$ be the continuous function such
  that~$\xi_\epsilon (t) = 0$, if~$t\le 0$ or~$t \ge 1$, $\xi_\epsilon
  (t) = 1 + \epsilon$, if~$ \epsilon/(1+\epsilon) \le t \le
  1/(1+\epsilon)$ and linear elsewhere.  It then follows, that the
  function $$ \chi_\varepsilon(t) = \int_{-\infty}^t
  \xi_\varepsilon(\tau)\, d\tau,\ \ t\in\Rl, $$ satisfies the
  assertion.
\end{proof} 

\begin{pro} 
  \label{hconst} Let $s_m$, $q_m$ ($m\ge 0$) be two increasing
  sequences such that 
  \begin{enumerate}
  \item $s_m\rightarrow+\infty$, $s_0=0$,
  \item $q_m\rightarrow+\infty$, $q_0=1$,
  \item $\ds \alpha=\sup_{m\ge1} {\log q_m - \log q_{m-1}\over
      s_m-s_{m-1}} < 1$.\par
  \end{enumerate}
  Then there exists a function $h$ that satisfies the
  conditions~\ref{FL}--\ref{LL} (preceding Proposition~\ref{ffuncPro})
  and such that $h(s_m)=q_m$ for every $m\ge0$.
\end{pro}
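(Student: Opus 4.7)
I work throughout with $g := \log h$ on $(0,\infty)$, so that condition~\ref{hmainprop} becomes the two-sided bound $0 \le g'(t) \le 1$ and the interpolation requirement $h(s_m)=q_m$ becomes $g(s_m) = \log q_m$. Set $R_m := \log q_m - \log q_{m-1}$ and $L_m := s_m - s_{m-1}$; the forced mean slope of $g$ on the interval $[s_{m-1},s_m]$ is $R_m/L_m$, and the hypothesis gives $R_m/L_m \le \alpha < 1$. The strict gap between $\alpha$ and $1$ is the slack that will make a smooth interpolant possible.

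Choose $\varepsilon>0$ with $\alpha(1+\varepsilon)\le 1$ and let $\chi_\varepsilon$ be the function provided by Proposition~\ref{fchi}. On each interval $[s_{m-1},s_m]$ define
\[
g(t) := \log q_{m-1} + R_m\,\chi_\varepsilon\!\left(\frac{t-s_{m-1}}{L_m}\right),
\]
and then set $h(t) := e^{g(t)}$ for $t>0$, $h(0):=1$, and $h(t):=h(-t)$ for $t<0$. The pieces of $g$ agree in value at every joint $s_m$ because $\chi_\varepsilon(0)=0$ and $\chi_\varepsilon(1)=1$. Since $\chi_\varepsilon$ is constant on $(-\infty,0]$ and on $[1,\infty)$, its $C^1$ regularity forces $\chi'_\varepsilon(0)=\chi'_\varepsilon(1)=0$, so the one-sided derivatives of $g$ at each $s_m$ (with $m\ge 1$) both vanish, and $g \in C^1(0,\infty)$. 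On each piece $0 \le g'(t) = (R_m/L_m)\chi'_\varepsilon(\cdot) \le \alpha(1+\varepsilon) \le 1$.

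Verification of the conditions on $h$ is then routine: \ref{FL} follows from $g \in C^1(0,\infty)$ and the chain rule for $h=e^g$ (evenness transfers the regularity to $(-\infty,0)$); \ref{hevenness} and $h(0)=q_0=1 \ge 0$ hold by construction; monotonicity on $(0,\infty)$ follows from $g'\ge 0$; \ref{hinf} follows from $g(s_m)=\log q_m\to +\infty$; and \ref{hmainprop} is exactly the bound on $g'$ just established. The main (and really only) obstacle is the $C^1$ matching at the joins $s_m$: a piecewise-linear $\log h$ would fail differentiability there, and a naive smooth interpolation whose endpoint slopes did not vanish could easily force an interior slope above $1$ when $\alpha$ is close to $1$. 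Proposition~\ref{fchi} is used precisely to produce a $C^1$ step function whose endpoint derivatives vanish while its supremum slope only marginally exceeds its mean slope.
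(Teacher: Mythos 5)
Your construction is exactly the paper's: the paper defines $H(t)=\sum_{m\ge1}\chi_\varepsilon\bigl((t-s_{m-1})/(s_m-s_{m-1})\bigr)(\log q_m-\log q_{m-1})$ with $\varepsilon=1/\alpha-1$ and sets $h=e^H$, which on each interval $[s_{k-1},s_k]$ reduces to precisely your piecewise formula for $g=\log h$. The proposal is correct and follows the same route, including the use of Proposition~\ref{fchi} and the bound $0\le H'\le\alpha(1+\varepsilon)\le1$.
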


\begin{proof} 
  Let $\varepsilon=1/\alpha-1$, and $\chi_\varepsilon$ be the function
  from Proposition~\ref{fchi}. For every $t\ge0$ we define
  \begin{equation} \label{Hdef}  
    H(t) = \sum_{m\ge1} \chi_\varepsilon\Bigl({t-s_{m-1}\over
      s_m-s_{m-1}}\Bigr) (\log q_m - \log q_{m-1}).  
  \end{equation} 
  We have that $x\ge s_{m-1}$ (respectively, $x < s_m$) if and only if
  $$
  {x - s_{m-1}\over s_m-s_{m-1}}\ge 0\ \ \Bigl(\hbox{respectively, }
  {x-s_{m-1}\over s_m-s_{m-1}} < 1\Bigr). $$
  Now, it follows from above
  that for every fixed $t\ge0$ the sum~\eqref{Hdef} is finite, $$
  H(s_0)
  = H(0) = 0 = \log q_0, $$
  and for every $k\ge1$ \begin{align*} H(s_k)
    & = \sum_{m\ge 1} \chi_\varepsilon\Bigl({s_k-s_{m-1}\over
      s_m-s_{m-1}}\Bigr)(\log q_m - \log q_{m-1})\cr & = \sum_{m=1}^k
    (\log q_m - \log q_{m-1}) = \log q_k.\cr \end{align*} We set $h(t)
  := \exp(H(t))$ for every $t\ge0$ and $h(t) = h(-t)$ for every $t<0$,
  then $h(s_m) = q_m$ for every $m\ge0$. Let us check the
  conditions~\ref{FL}--\ref{LL}.

  \toAlpha
  \begin{enumerate}
  \item The function $H$ is a $C^1$-function as a finite sum of
    $C^1$-functions, so $h$ is a $C^1$-function for every $t\ne 0$;
  \item this item holds by the definition of~$h(t)$, and $h(0) =
    \exp(H(0)) =1$;
  \item the function~$H(t)$ is increasing, for every $t\ge0$, as it is
    the sum of increasing functions, so the function $h$ is increasing
    also;
  \item since the sequence $h(s_m) = q_m$ tends to infinity and since
    $h(\cdot)$ is an increasing even function, we have
    $h(\pm\infty)=+\infty$;
  \item for every $t\ge 0$ there exists an integer $k\ge1$ such that
    $s_{k-1}\le t < s_k$, thus it follows from Proposition~\ref{fchi},
    that
    \begin{align*}
      H'(t) & = \sum_{m\ge 1} \chi'_\varepsilon\Bigl({t-s_{m-1}\over
        s_m-s_{m-1}}\Bigr) {\log q_m - \log q_{m-1}\over s_m -
        s_{m-1}},\cr & = \chi'_\varepsilon\Bigl({t-s_{k-1}\over
        s_k-s_{k-1}}\Bigr) {\log q_k - \log q_{k-1}\over s_k-s_{k-1}}
      \cr & \le \alpha(1+\varepsilon) = 1,
    \end{align*}
    and so $$ 0\le H'(t)={h'(t)\over h(t)} \le 1. $$
  \end{enumerate}
  \toRomans
\end{proof}

\noindent Now, we are in a position to prove our main result. 

\begin{thm} 
  \label{main} For every separable symmetric function space $E$ with
  trivial Boyd indices, there exists a $C^1$-function $f_E$,
  self-adjoint operators $W$, $X$, acting on a separable Hilbert
  space~$\H$ such that $$
  W\in\cC^E,\ \ [W,X]\in\cC^E,\ \ W(Dom\ 
  X)\subseteq Dom\ X,\ \ [f_E(W),X]\notin\cC^E.$$
\end{thm}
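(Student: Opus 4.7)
The plan is to apply Proposition~\ref{tocomm} to the sequence of diagonal matrices $W_m = \Phi_{2m}(p_m B_m)$ featuring in Proposition~\ref{Eestimate}, paired with a $C^1$-function $f_E$ of the form~\eqref{ffunc}. The two free parameters are the scalar sequence $\{p_m\}_{m \ge 3} \subset (0, 1]$ and the function $h$ defining $f_E$, and I must coordinate them so that three conditions hold simultaneously: $\|\Mf{W_m}\|_{\cC^E \mapsto \cC^E} \to \infty$ (the hypothesis of Proposition~\ref{tocomm}); $\sup_m \|W_m\|_{\cC^\infty} < \infty$ (giving $W(Dom\ X) \subseteq Dom\ X$); and $\sum_{m \ge 3} \|W_m\|_{\cC^E} < \infty$ (giving $W \in \cC^E$). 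Once these are arranged, Proposition~\ref{tocomm} directly outputs the required self-adjoint $W$, $X$.

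The uniform bound on $\|W_m\|_{\cC^\infty}$ is free, since $\|W_m\|_{\cC^\infty} = p_m\|B_m\|_{\cC^\infty} = p_m e^{-1} \le 1$. For the summability in $\cC^E$, I note that $W_m$ is a finite diagonal matrix with all entries of modulus at most $p_m e^{-1}$, hence $\|W_m\|_{\cC^E} \le p_m e^{-1}\,c_m$, where $c_m$ depends only on the size $k_m$ of $W_m$ and on the norm of $E$; choosing $p_m$ to decay fast enough (e.g.\ $p_m = 2^{-m}(1+c_m)^{-1}$) makes the series converge. For the Schur multiplier blow-up, Proposition~\ref{Eestimate} gives
\[
\|\Mf{W_m}\|_{\cC^E\mapsto\cC^E}\ge\frac{K_1\log(m/2)}{h(m-\log p_m)},
\]
so it suffices to produce an $h$ with $h(m-\log p_m)=o(\log m)$.

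With $\{p_m\}$ fixed first, I would construct $h$ through Proposition~\ref{hconst}: set $s_0=0$, $q_0=1$, and for $m\ge 3$ take $s_m=m-\log p_m$ and $q_m=\sqrt{\log m}$ (extending monotonically across $m=1,2$). Then $s_m\uparrow\infty$ with gaps $s_m-s_{m-1}\ge 1$, while $\log q_m-\log q_{m-1}\to 0$, so the slope condition $\alpha=\sup_m(\log q_m-\log q_{m-1})/(s_m-s_{m-1})<1$ of Proposition~\ref{hconst} is comfortably satisfied. The resulting $h$ meets conditions~(a)--(e), and by Proposition~\ref{ffuncPro} the associated $f_E$ is $C^1$ on $(-1,1)$; I would extend $f_E$ to a $C^1$-function on $\Rl$ in any convenient way (its values outside $(-1,1)$ are irrelevant since $\|W_m\|_{\cC^\infty}\le 1$).

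The main obstacle is the tension between conditions one and three: making $p_m$ small to secure summability pushes the evaluation point $s_m=m-\log p_m$ far out, where I must still force $h$ to stay sub-logarithmic in $m$. Proposition~\ref{hconst} is designed precisely for this: its slope hypothesis controls only the \emph{discrete logarithmic increments} $(\log q_m-\log q_{m-1})/(s_m-s_{m-1})$, not the absolute sizes of $s_m$ or $q_m$, so no matter how widely the prescribed points $s_m$ are spread, I can keep $h$ as small as I wish at those points.
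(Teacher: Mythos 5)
Your proposal is correct and follows essentially the same route as the paper: the paper likewise takes $W_m=\Phi_{2m}(p_mB_m)$, chooses $q_m=(\log(m+e))^{1/2}$, picks $p_m$ decaying fast enough that $\|W_m\|_{\cC^E}=p_m\|\Phi_{2m}(B_m)\|_{\cC^E}\le m^{-2}$, builds $h_E$ via Proposition~\ref{hconst} with $s_m=m-\log p_m$, and feeds everything into Propositions~\ref{Eestimate} and~\ref{tocomm}. The only differences are bookkeeping (the paper fixes $p_m$ by an inductive list of conditions, one of which enforces the slope bound that you correctly observe is automatic once $p_m$ is decreasing and $q_m$ is sub-logarithmic).
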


\begin{proof} Let $m\ge0$, $q_m := (\log(m+e))^{1/2}$, $B_m$ be the
  diagonal matrices, given by~\eqref{DV} and~\eqref{AB}, $\Phi_{2m}$,
  $\Psi_{2m}$ be the operators from Proposition~\ref{phipsi} for
  $\varepsilon=1/2$. Let $\{p_m\}_{m \ge 0}$ be a sequence that
  satisfies the following five conditions
  \begin{enumerate} 
  \item $p_m$ is decreasing to zero;
  \item $0 < p_m\le 1$;
  \item $p_0=1$;
  \item\label{phigrowth} $\ds {1\over p_m} \ge
    m^2\|\Phi_{2m}(B_m)\|_{\cC^E}$, $m\ge 1$;
  \item\label{hprop} $\ds {1\over p_m} \ge {q_m^2\over e q^2_{m-1}}
    \cdot {1\over p_{m-1}}$, $m\ge1$.
  \end{enumerate}
  We construct such a sequence by induction. If the numbers $p_0, p_1,
  \ldots, p_{m-1}$ satisfy the conditions above, then $p_m$ can be
  taken to be any positive number for which $$ {1\over p_m} \ge \max
  \biggl\{1,{1\over p_{m-1}}, m^2\|\Phi_{2m}(B_m)\|_{\cC^E},
  {q_m^2\over e q^2_{m-1}} \cdot {1\over p_{m-1}} \biggr\}. $$\par

  \noindent It follows from~\ref{hprop} above, that $$ {ep_{m-1}\over
    p_m} \ge {q_m^2\over q^2_{m-1}}, $$ and, taking logarithms, $$ 1 +
  \log {1\over p_m} - \log {1\over p_{m-1}} \ge 2(\log q_m - \log
  q_{m-1}). $$ Putting $s_m = m - \log p_m$, we have $$ 0 \le {\log
    q_m - \log q_{m-1}\over s_m - s_{m-1}} \le {1\over 2},\ \ m\ge0.
  $$ Thus, we have verified that the sequences $\{q_m\}_{m\ge0}$ and
  $\{s_m\}_{m\ge0}$ satisfy the conditions of
  Proposition~\ref{hconst}, and so there exists a function $h_E(t)$
  such that $$ h_E(m-\log p_m) = h_E(s_m) = q_m = (\log(e+m))^{1/2}.
  $$

  \noindent If $f_E$ is the function, given by~\eqref{ffunc}, with the
  above choice of $h_E$, $W_m := \Phi_{2m}(p_mB_m)\in\M{k_m}$, where
  $B_m$ given by~\eqref{DV} and~\eqref{AB}, is the finite diagonal
  matrix then it follows from Proposition~\ref{Eestimate} $$
  \|\Mf{W_m}\|_{\cC^E\mapsto\cC^E} \ge {K_1\over h(m-\log p_m)} \log
  {m\over2} = K_1 {\log {(m/2)}\over (\log(m+e))^{1/2}} \rightarrow
  \infty. $$ On the other hand, by our choice of $p_m$
  (see~\eqref{phigrowth} above) we have that $$ \|W_m\|_{\cC^\infty}
  \le \|W_m\|_{\cC^E} = p_m \|\Phi_{2m}(B_m)\|_{\cC^E} \le {1\over
    m^2},\ \ m\ge 3. $$ Now the assertion of Theorem~\ref{main}
  follows from Proposition~\ref{tocomm}.
\end{proof}

\section{Domains of generators of automorphism flows.}

Let $E$ be a separable symmetric function space with trivial Boyd
indices, let~$\cC^E$ be the corresponding symmetrically normed ideal and
let~$X$ be a self-adjoint operator (may be unbounded), acting on a
separable Hilbert space $\H$. We consider a
group~$\alpha=\{\alpha_t\}_{t\in\Rl}$ of automorphisms on $\cC^E$ given
by $$ \alpha(t)T = e^{itX}Te^{-itX},\ \ T\in\cC^E,\ \ t\in\Rl. $$ It
follows from separability of $\cC^E$ that $\alpha$ is a $C_0$-group,
\cite[Corollary~4.3]{PS-RFlow}.  The infinitesimal generator $\delta$ of
$\alpha(t)$ is defined by
\begin{align*} 
  Dom\ \delta & := \biggl\{T\in\cC^E:\ \hbox{ there exists }
  \|\cdot\|_{\cC^E}-\lim_{t\rightarrow 0} {\alpha(t)T - T\over
    t}\biggr\},\cr \delta(T) & :=
  \|\cdot\|_{\cC^E}-\lim_{t\rightarrow0}{\alpha(t)T-T\over t}\ \hbox{
    for every } T\in Dom\ \delta.\cr 
\end{align*}
$\delta$ is a closed densely defined symmetric derivation on $\cC^E$,
i.e.  densely defined closed linear operator such that~$\delta(T^*) =
\delta(T)^*$ and~$\delta(TS) = \delta(T)\, S + T\delta(S)$, for
all~$T,S\in Dom\ \delta$, see e.g.~\cite[Proposition~4.5]{PS-RFlow}.
It is proved in \cite[Proposition~2.2]{AdPS2005}, that $$ Dom\ \delta
= \{T\in\cC^E:\ \ T(Dom\ X)\subseteq Dom\ X,\ [T,X]\in\cC^E\} $$ and
$$ \delta(T) = i[T, X],\ \ T \in Dom\ \delta. $$ Now,
Theorem~\ref{main} yields

\begin{corl} \label{mainderivation} For every separable symmetric
  function space $E$ with trivial Boyd indices, there exists a
  $C^1$-function $f_E$, a self-adjoint operator $W\in\cC^E$ and closed
  densely defined symmetric derivation $\delta$ on~$\cC^E$, such that
  $W\in Dom\ \delta$ but $$ f_E(W)\notin Dom\ \delta.$$
\end{corl}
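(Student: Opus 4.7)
The corollary should follow almost immediately by packaging the commutator-theoretic content of Theorem~\ref{main} into the derivation-theoretic language of this section. The overall strategy is to take the self-adjoint $X$ supplied by Theorem~\ref{main} and use it to build the automorphism group whose generator is the desired $\delta$; the rest is translation between two equivalent descriptions of $Dom\ \delta$.

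Concretely, I would first invoke Theorem~\ref{main} to produce a $C^1$-function $f_E$ together with self-adjoint operators $W, X$ on a separable Hilbert space $\H$ satisfying $W \in \cC^E$, $W(Dom\ X) \subseteq Dom\ X$, $[W, X] \in \cC^E$, and $[f_E(W), X] \notin \cC^E$. With this particular $X$ fixed, I would define the one-parameter family $\alpha_t(T) = e^{itX} T e^{-itX}$ on $\cC^E$; by the discussion preceding the corollary (and by \cite[Corollary~4.3]{PS-RFlow}) this is a $C_0$-group, and its infinitesimal generator $\delta$ is a closed densely defined symmetric derivation on $\cC^E$.

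The remaining step is to apply the identification
\[
Dom\ \delta = \{T \in \cC^E : T(Dom\ X) \subseteq Dom\ X,\ [T, X] \in \cC^E\}
\]
from \cite[Proposition~2.2]{AdPS2005}. The three properties of $W$ furnished by Theorem~\ref{main} match the three defining conditions of $Dom\ \delta$ verbatim, so $W \in Dom\ \delta$; conversely, the failure $[f_E(W), X] \notin \cC^E$ violates one of the defining conditions, and hence $f_E(W) \notin Dom\ \delta$ (this holds regardless of whether or not $f_E(W)$ itself belongs to $\cC^E$, since if it did not then it trivially would not lie in $Dom\ \delta \subseteq \cC^E$).

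I do not anticipate any genuine obstacle: all of the analytic and combinatorial work — the choice of $h_E$, the Schur-multiplier lower bounds of Propositions~\ref{Eestimate} and~\ref{tocomm}, and the careful induction selecting $p_m$ to keep $W \in \cC^E$ — has already been carried out in producing the witness $(W, X, f_E)$ of Theorem~\ref{main}. The corollary is then a one-step dictionary translation through the cited description of $Dom\ \delta$.
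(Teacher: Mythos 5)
Your proposal is correct and is essentially the paper's own argument: the corollary is obtained by fixing the $X$ from Theorem~\ref{main}, taking $\delta$ to be the generator of $\alpha_t(T)=e^{itX}Te^{-itX}$, and translating via the description of $Dom\ \delta$ from \cite[Proposition~2.2]{AdPS2005}. Your extra remark that $f_E(W)\notin Dom\ \delta$ holds whether or not $f_E(W)\in\cC^E$ is a sound (if implicit in the paper) clarification.
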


} 


\begin{thebibliography}{10}

\bibitem{Ar1978}
J.~Arazy, \emph{Some remarks on interpolation theorems and the boundness of the
  triangular projection in unitary matrix spaces}, Integral Equations Operator
  Theory \textbf{1} (1978), no.~4, 453--495.

\bibitem{PS-RFlow}
B.~de~Pagter and F.~A. Sukochev, \emph{Commutator estimates and {R}-flows in
  non-commutative operator spaces}, {P}roc.\ {E}dinb.\ {M}ath.\ {S}oc.\ (to
  appear).

\bibitem{AdPS2005}
B.~de~Pagter, F.~A. Sukochev, and W.~van Ackooij, \emph{Domains of
  infinitesimal generators of automorphism flows}, J. Funct. Anal. \textbf{218}
  (2005), no.~2, 409--424.

\bibitem{GohbergKrein}
I.~C. Gohberg and M.~G. Kre{\u\i}n, \emph{Vvedenie v teoriyu lineinykh
  nesamosopryazhennykh operatorov v gilbertovom prostranstve}, Izdat.
  ``Nauka'', Moscow, 1965.

\bibitem{KiShII}
E.~Kissin and V.~S. Shulman, \emph{Classes of operator-smooth functions. {II}.
  {O}perator-differentiable functions}, Integral Equations Operator Theory
  \textbf{49} (2004), no.~2, 165--210.

\bibitem{KiShI}
\bysame, \emph{Classes of operator-smooth functions. {I}.
  {O}perator-{L}ipschitz functions}, Proc. Edinb. Math. Soc. (2) \textbf{48}
  (2005), no.~1, 151--173.

\bibitem{KiShIII}
\bysame, \emph{Classes of operator-smooth functions. {III}. {S}table functions
  and {F}uglede ideals}, Proc. Edinb. Math. Soc. (2) \textbf{48} (2005), no.~1,
  175--197.

\bibitem{LT-II}
J.~Lindenstrauss and L.~Tzafriri, \emph{Classical {B}anach spaces. {II}},
  Ergebnisse der Mathematik und ihrer Grenzgebiete [Results in Mathematics and
  Related Areas], vol.~97, Springer-Verlag, Berlin, 1979.

\bibitem{McI1978}
A.~McIntosh, \emph{Functions and derivations of {$C\sp{\ast} $}-algebras}, J.
  Funct. Anal. \textbf{30} (1978), no.~2, 264--275.

\bibitem{PoSu}
D.~S. Potapov and F.~A. Sukochev, \emph{Lipschitz and commutator estimates in
  symmetric operator spaces}, to appear in J. Operator Theory.

\end{thebibliography}


\providecommand{\bysame}{\leavevmode\hbox to3em{\hrulefill}\thinspace}

\end{document}